\newcommand{\thalb}{\ensuremath{{\textstyle\frac{1}{2}}}}
\newcommand{\menge}[2]{\big\{{#1}~\big |~{#2}\big\}}
\newcommand{\To}{\ensuremath{\rightrightarrows}}
\newcommand{\scal}[2]{\left\langle{#1},{#2}  \right\rangle}
\newcommand{\RR}{\ensuremath{\mathbb R}}
\newcommand{\rl}{\ensuremath{r_L}}
\newcommand{\all}{\hbox{for all}}
\newcommand{\NN}{\ensuremath{\mathbb N}}
\newcommand{\RX}{\ensuremath{\,\left]-\infty,+\infty\right]}}
\newcommand{\dom}{\ensuremath{\operatorname{dom}}}
\newcommand{\ran}{\ensuremath{\operatorname{ran}}}
\newcommand{\gra}{\ensuremath{\operatorname{gra}}}
\newcommand{\Id}{\ensuremath{\operatorname{Id}}}
\newcommand{\bx}{\ensuremath{\mathbf{x}}}
\newcommand{\rbar}{\,]{-}\infty,\infty]}
\newcommand{\quand}{\quad \hbox{and} \quad}
\newtheorem{theorem}{Theorem}[section]
\newtheorem{lemma}[theorem]{Lemma}
\newtheorem{corollary}[theorem]{Corollary}
\newtheorem{proposition}[theorem]{Proposition}
\theoremstyle{definition}
\newtheorem{definition}[theorem]{Definition}
\newtheorem{example}[theorem]{Example}
\newtheorem{remark}[theorem]{Remark}
\newtheorem{fact}[theorem]{Fact}
\begin{document}

\title{Weak subdifferentials, $r_L$--density and maximal monotonicity}

\author{
Stephen Simons\thanks{
Department of Mathematics, University of California, Santa Barbara, CA\ 93106-3080, U.S.A.
Email: \texttt{simons@math.ucsb.edu}.}
\and
Xianfu Wang\thanks{
Department of Mathematics, University of British Columbia, Kelowna, B.C.\ V1V~1V7, Canada. E-mail: \texttt{shawn.wang@ubc.ca}.}}

\date{}

\maketitle

\begin{abstract}\noindent
In this paper, we first investigate an abstract subdifferential for which  (using Ekeland's variational principle) we can prove an analog of the Br\o ndsted--Rockafellar property.   We introduce the ``$r_L$--density'' of a subset of the product of a Banach space with its dual. A closed $r_L$--dense {\em monotone} set is maximally monotone, but we will also consider the case of {\em nonmonotone} closed $r_L$--dense sets.   As a special case of our results, we can prove Rockafellar's result that the subdifferential of a proper convex lower semicontinuous function is maximally monotone.
\end{abstract}

{\small \noindent {\bfseries 2010 Mathematics Subject Classification:}
{Primary 49J52; Secondary 47H04, 47H05, 65K10.}}

\noindent {\bfseries Keywords:} Abstract subdifferential, Br\o ndsted--Rockafellar property,\break multifunction, monotonicity, monotone polar, $r_L$--density.


\section{Introduction}

The goal of this paper is to use the function $\rl$ (defined below) to study weak subdifferentials of proper lower semicontinuous functions, approximate Minty type results on multifunctions, and monotone polars of weak subdifferentials.
 
We start the paper by introducing in Definition~\ref{WSUBDIFF} an abstract subdifferential (the {\em weak subdifferential}) that includes that introduced by Thibault and\break Zagrodny in \cite{thibault}, and we establish in Theorem~\ref{ABR} that this weak definition still possesses a  Br\o ndsted--Rockafellar property.

Now let $(E,\|\cdot\|)$ be a real Banach space with topological dual $E^*$. Let $j:=\thalb\|\cdot\|^2$, and  write $J: =\partial j\colon\ E \To E^*$.    The multifunction $J$ is known as the \emph{duality mapping}. 

Define  $r_{L}:E\times E^*\rightarrow\RR$ by
$$(x,x^*)\mapsto \rl(x,x^*):=\thalb\|x\|^2+\thalb\|x^*\|^2+\scal{x}{x^*} = j(x) + j^*(x^*) + \scal{x}{x^*}.$$
It is worth making a few historical comments about the function $r_L$.   It appears explicitly in the ``perfect square criterion for maximality'' in the reflexive case in \cite[Theorem 10.3,\ p.\ 36]{MANDM}.   It also appears explicitly (still in the reflexive case) in Simons--Z\u{a}linescu \cite{simonszal}, with the symbol ``$\Delta$'', where it was used to study Fitzpatrick functions and the maximality of a sum of monotone operators.   It was used in the nonreflexive case by Zagrodny in \cite{ZAGRODNY}.    

Obviously,
\begin{align*}
\all\ (x,x^*) \in E \times E^*,\quad \rl(x,x^*) & \geq \thalb\|x\|^2+\thalb\|x^*\|^2-\|x\|\|x^*\|\geq 0
\end{align*}
and
\begin{equation}\label{e:duality}
\rl(x,x^*)=0 \quad \Leftrightarrow \quad x^*\in -J(x).
\end{equation}

\begin{definition}\label{RLDENSE}
Let $A\subseteq E\times E^*$. We say that $A$ is {\em$\rl$--dense in $E\times E^*$} if, for all $(y,y^*) \in E \times E^*$, 
$$\inf_{(s,s^*)\in A}\rl(s - y,s^* - y^*)=0.$$
We say that $A$ is {\rm stably $\rl$--dense in $E\times E^*$} if, for all $(y,y^*) \in E \times E^*$, there exists $M \ge 0$ such that
$$\inf_{(s,s^*)\in A,\ \|s - y\| \le M,\ \|s^* - y^*\| \le M}\rl(s - y,s^* - y^*)=0.$$
\end{definition}

The concept of $r_L$--density was studied in the context of monotone operators (and even the more general situation of ``$L$--positive sets'') in \cite{simonsprep}.   That paper also contains the motivation for the notation ``$r_L$''.

In Theorem~\ref{t:main}, we prove that the graph of the weak subdifferential of a proper lower semicontinuous function is stably $r_L$--dense provided that the function is not too wildly negative.   If $f$ is convex, this result generalizes\break Rockafellar's theorem on the maximal monotonicity of subdifferentials.

In Sections \ref{MINTYsec}--\ref{POLsec}, we give some approximate Minty type results (some of them for Hilbert spaces) and some results on {\em monotone polarity}, and in Section~\ref{RLDsec}, we study sufficient conditions for a multifunction to be stably $r_L$--dense.

We finish this introduction with some notation.   If $S:E\To E^*$, we write $\gra S$, $\dom S$ and $\ran S$ for the {\em graph}, {\em domain} and {\em range} of $S$, which are defined by $\gra S := \menge{(x,x^*)\in E\times E^*}{x^*\in Sx}$, $\dom S:=\menge{x\in E}{S(x)\neq\varnothing}$ and $\ran S:=\bigcup\menge{S(x)}{x\in\dom S}$.
 
\begin{lemma}\label{EXACTMIN}
Let $S:\ E\To E^*$ be a multifunction.
Then
$$\gra S - \gra (-J) = E\times E^*$$
if and only if, for all $(y,y^*)\in E\times E^*$,
$$\min_{(s,s^*)\in \gra S}\rl(s - y,s^* - y^*)=0.$$
\end{lemma}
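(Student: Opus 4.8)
The plan is to prove an iff between a set-difference equation and a pointwise-minimum statement, exploiting the key characterization in (\ref{e:duality}) that $\rl$ vanishes exactly on the negative of the duality mapping's graph. The underlying logic should reduce the whole lemma to that single equivalence together with unpacking the definitions of $\gra S$, $\gra(-J)$, and the set difference.

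Let me work out both directions by translating each statement into the same quantified form.

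First direction: unfold the hypothesis. The statement $\gra S - \gra(-J) = E\times E^*$ means that every pair $(y,y^*)\in E\times E^*$ can be written as $(s,s^*)-(t,t^*)$ with $(s,s^*)\in\gra S$ and $(t,t^*)\in\gra(-J)$. Now $(t,t^*)\in\gra(-J)$ means $t^*\in -J(t)$, and by the equivalence (\ref{e:duality}) this is precisely the statement $\rl(t,t^*)=0$. Writing $(t,t^*)=(s,s^*)-(y,y^*)=(s-y,s^*-y^*)$, the condition $(t,t^*)\in\gra(-J)$ becomes $\rl(s-y,s^*-y^*)=0$. So the difference equation says exactly: for every $(y,y^*)$ there exists $(s,s^*)\in\gra S$ with $\rl(s-y,s^*-y^*)=0$. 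Since $\rl\ge 0$ everywhere by the displayed inequality preceding Definition~\ref{RLDENSE}, such an $(s,s^*)$ realizes the infimum and makes it an attained minimum equal to $0$, which is the second statement. For the converse, if for each $(y,y^*)$ the minimum $\min_{(s,s^*)\in\gra S}\rl(s-y,s^*-y^*)=0$ is attained at some $(s,s^*)\in\gra S$, then $\rl(s-y,s^*-y^*)=0$; applying (\ref{e:duality}) again, $s^*-y^*\in -J(s-y)$, so $(s-y,s^*-y^*)\in\gra(-J)$, and therefore $(y,y^*)=(s,s^*)-(s-y,s^*-y^*)\in\gra S-\gra(-J)$. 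As $(y,y^*)$ was arbitrary, the difference covers all of $E\times E^*$.

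There is essentially no serious obstacle here; the lemma is a definitional translation hinging entirely on (\ref{e:duality}) and nonnegativity of $\rl$. The one point deserving care is the use of the word $\min$ rather than $\inf$: the second statement asserts that the value $0$ is \emph{attained}, so in each direction I must be careful that the attainment is genuinely equivalent to the existence of a witnessing pair in $\gra S$. Since $\rl\ge 0$, attaining the value $0$ is the same as the existence of a point at which $\rl$ equals $0$, so the $\min$/$\inf$ distinction is handled automatically by the nonnegativity already recorded in the excerpt. I would simply write out the chain of equivalences once, noting at the relevant step that nonnegativity of $\rl$ upgrades ``infimum $=0$ and attained'' to ``$\rl=0$ at some feasible point.''
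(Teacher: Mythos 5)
Your proof is correct and follows exactly the route the paper intends: the paper's own proof is the single line ``immediate from \eqref{e:duality} and the definition of $\gra S$,'' and your write-up simply makes explicit the definitional unwinding (Minkowski difference, the identification $(t,t^*)=(s-y,s^*-y^*)$, and nonnegativity of $\rl$ to handle attainment of the minimum) that the authors leave to the reader. Nothing to change.
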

\begin{proof}
This is immediate from \eqref{e:duality} and the definition of $\gra S$.
\end{proof}
\section{Weak subdifferentials and a Br\o ndsted--\break Rockafellar property}
If $f:E\rightarrow\RX$ is proper, convex and lower semicontinuous $\partial f$ is the classical subdifferential of convex analysis, defined by
$$\partial f(x)=\menge{x^*\in E^*}{\all\ y \in E,\ f(y) \ge f(x) + \scal{y-x}{x^*}}.$$
\begin{definition}\label{WSUBDIFF}
A {\em weak subdifferential}, $\partial_w$, is a rule that associates with each proper lower semicontinuous function $f:E\rightarrow\RX$ a multifunction\break $\partial_w f:E\To E^*$ such that
\begin{enumerate}
\item $0\in\partial_w f(x)$ if $f$ attains a strict global minimum at $x$.
\item $\partial_w (f+h)(x) \subseteq \partial_w f(x) + \partial h(x)$ whenever $h$ is a continuous convex real function on $E$.
\end{enumerate}
\end{definition}
The abstract subdifferential introduced by Thibault and Zagrodny in \cite{thibault} gives a weak subdifferential.   This implies that a number of other subdifferentials that have been introduced over the years also give weak subdifferentials.   See the list on \cite[p. 35]{thibault}.   In particular, the Clarke-Rockafellar subdifferential is a weak subdifferential \big(see \cite[Section 2.9]{clarke}, or a combination of \cite[Theorem 5]{Rock80} and \cite[Theorem 2]{Rock79}\big).   Also, \cite[Corollary 4.3]{shao} or \cite{Boris1} show that Mordukhovich's limiting subdifferential is a weak subdifferential if we confine our attention to Asplund spaces.  
\medbreak
We will use the following well known result from variational analysis.   See, for instance, \cite[Lemma 3.13, p. 45]{phelpsbook} or \cite[Theorem 1.45, p. 19]{BC2011}. 
\begin{fact}[Ekeland]\label{f:variational}
Assume that $g:E\rightarrow\RX$ is a proper lower semicontinuous function and bounded below. Suppose that $\alpha,\beta>0$ and $g(u)\leq \inf_{E}g + \alpha\beta$. Then there exists
$s\in \dom g$ such that
\begin{enumerate}
\item $g(s) + \beta\|s- u\|\leq g(u)$;
\item $g(x)+\beta \|x-s\|> g(s)$ whenever $x\neq s$.
\end{enumerate}
We note from (i) above that $g(u) - \alpha\beta + \beta\|s - u\|\leq g(u)$, consequently $\|s-u\|\leq \alpha$.
\end{fact}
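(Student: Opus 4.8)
The plan is to run the classical descent construction underlying Ekeland's principle, exploiting the completeness of $E$ and the lower semicontinuity of $g$. For each $x\in\dom g$ introduce the ``slice''
$$S(x):=\menge{y\in\dom g}{g(y)+\beta\|y-x\|\le g(x)},$$
and observe that $x\in S(x)$ and that $y\in S(x)$ forces $g(y)\le g(x)$. The relation $y\preceq x\Leftrightarrow y\in S(x)$ is a partial order (the triangle inequality supplies transitivity), conclusion (i) is exactly the assertion $s\in S(u)$, and conclusion (ii) is exactly the assertion that $S(s)=\{s\}$, i.e.\ that $s$ is $\preceq$-minimal.

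First I would construct a $\preceq$-decreasing sequence $u=u_0\succeq u_1\succeq\cdots$ by choosing, given $u_n$, a point $u_{n+1}\in S(u_n)$ with
$$g(u_{n+1})\le \inf\nolimits_{S(u_n)}g+\thalb\big(g(u_n)-\inf\nolimits_{S(u_n)}g\big),$$
which is possible since $S(u_n)$ is nonempty and $g$ is bounded below, so the infimum is finite. Because $u_{n+1}\in S(u_n)$ gives $\beta\|u_{n+1}-u_n\|\le g(u_n)-g(u_{n+1})$, the sequence $\big(g(u_n)\big)$ is nonincreasing and bounded below, hence convergent to some $\ell$, and the telescoping estimate $\sum_n\beta\|u_{n+1}-u_n\|\le g(u_0)-\ell$ shows $\sum_n\|u_{n+1}-u_n\|<\infty$. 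Thus $(u_n)$ is Cauchy and, $E$ being complete, converges to some $s\in E$.

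Next I would verify $s\in S(u_n)$ for every $n$. Transitivity gives $u_m\in S(u_n)$, i.e.\ $g(u_m)+\beta\|u_m-u_n\|\le g(u_n)$, for all $m\ge n$; letting $m\to\infty$ and using $\|u_m-u_n\|\to\|s-u_n\|$ together with $g(s)\le\liminf_m g(u_m)=\ell$ (lower semicontinuity) yields $g(s)+\beta\|s-u_n\|\le g(u_n)$, so $s\in\dom g$ and $s\in S(u_n)$. Taking $n=0$ gives (i). The estimate $\|s-u\|\le\alpha$ then follows at once: the hypothesis $g(u)\le\inf_E g+\alpha\beta$ gives $g(s)\ge\inf_E g\ge g(u)-\alpha\beta$, and substituting into (i) leaves $\beta\|s-u\|\le\alpha\beta$.

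Finally, for (ii) I would show $S(s)=\{s\}$. My choice of $u_{n+1}$ rearranges to $\inf_{S(u_n)}g\ge 2g(u_{n+1})-g(u_n)\to\ell$, and since also $\inf_{S(u_n)}g\le g(u_{n+1})\to\ell$, we get $\inf_{S(u_n)}g\to\ell$. If $x\in S(s)$, then $s\in S(u_n)$ and transitivity give $x\in S(u_n)$ for all $n$, whence $g(x)\ge\inf_{S(u_n)}g\to\ell\ge g(s)$; combining $g(x)\ge g(s)$ with $g(x)+\beta\|x-s\|\le g(s)$ forces $\|x-s\|=0$, so $x=s$. Hence $S(s)=\{s\}$, which is (ii). The step I expect to be the most delicate is the passage to the limit in the slice inequalities: one must invoke lower semicontinuity in precisely the right place both to land $s$ inside every $S(u_n)$ and to pin $g(s)$ to the limiting value $\ell$, since (i), the estimate on $\|s-u\|$, and the minimality argument all hinge on $g(s)\le\ell$.
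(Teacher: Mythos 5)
Your proof is correct and complete: it is the standard iterative ``slice'' construction of Ekeland's principle (essentially the argument in the references the paper cites, Phelps and Bauschke--Combettes), and you also correctly recover the final estimate $\|s-u\|\le\alpha$ from (i). The paper itself offers no proof of this statement --- it is recorded as a known Fact with citations, the only supplied argument being that one-line deduction of $\|s-u\|\le\alpha$ --- so your write-up simply fills in the standard details consistently with the cited sources.
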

We will need the following simple result:
\begin{lemma}\label{SIMPLE}
Let $E$ be a Banach space, $s \in E$, $\beta > 0$ and $h\colon\ E \to \RX$ be defined by $h := \beta\|\cdot - s\|$.   Then $\partial h(s) = \{z^* \in E^*\colon\ \|z^*\| \le \beta\}$.
\end{lemma}
\begin{proof}
By definition, $z^* \in \partial h(s)$ exactly when, for all $x \in E$, $h(s) + \scal{x}{z^*} \le h(x + s)$. But, from the definition of $h$, this is equivalent to saying that, for all $x \in E$, $\scal{x}{z^*} \le \beta\|x\|$, that is to say $\|z^*\| \le \beta$.
\end{proof}

\begin{lemma}\label{BRLEM}
Let $E$ be a Banach space, $g\colon\ E \to \rbar$ be lower semicontinuous, $\alpha,\beta > 0$, $u \in \dom g$ and $g(u) \le \inf_Eg + \alpha\beta$.   Then there exists $(s,x^*) \in \gra \partial_w g$ such that $\|s - u\| \le \alpha$, $g(s) \le g(u)$ and $\|x^*\| \le \beta$.
\end{lemma}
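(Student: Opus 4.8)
The plan is to combine Ekeland's variational principle (Fact~\ref{f:variational}) with the two defining properties of the weak subdifferential. First I would observe that the hypotheses force $g$ to be proper and bounded below: since $u \in \dom g$ and $g$ maps into $\rbar$ we have $g(u) \in \RR$, and then $g(u) \le \inf_E g + \alpha\beta$ gives $\inf_E g \ge g(u) - \alpha\beta > -\infty$. Hence Fact~\ref{f:variational} applies and yields a point $s \in \dom g$ satisfying (i) $g(s) + \beta\|s - u\| \le g(u)$ and (ii) $g(x) + \beta\|x - s\| > g(s)$ for every $x \ne s$, together with the estimate $\|s - u\| \le \alpha$. From (i) I immediately read off $g(s) \le g(u)$, and $\|s - u\| \le \alpha$ is already one of the required conclusions; it remains only to produce the dual vector $x^*$.

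The key step is to manufacture $x^*$ out of the strict minimality statement (ii). I would set $h := \beta\|\cdot - s\|$, a continuous convex real function on $E$, and rewrite (ii) as $(g + h)(x) > (g + h)(s)$ for all $x \ne s$. This says precisely that $g + h$ attains a \emph{strict} global minimum at $s$, so property (i) of Definition~\ref{WSUBDIFF} gives $0 \in \partial_w(g + h)(s)$. Applying property (ii) of Definition~\ref{WSUBDIFF} to the sum $g + h$ then yields $\partial_w(g + h)(s) \subseteq \partial_w g(s) + \partial h(s)$, whence $0 \in \partial_w g(s) + \partial h(s)$.

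Consequently there exist $x^* \in \partial_w g(s)$ and $z^* \in \partial h(s)$ with $x^* = -z^*$. By Lemma~\ref{SIMPLE}, $\partial h(s)$ is exactly the closed ball $\{z^* \in E^*\colon \|z^*\| \le \beta\}$, so $\|x^*\| = \|z^*\| \le \beta$. This delivers $(s, x^*) \in \gra \partial_w g$ with $\|s - u\| \le \alpha$, $g(s) \le g(u)$ and $\|x^*\| \le \beta$, completing the argument.

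I do not anticipate a serious obstacle; once the auxiliary function $h$ is chosen the proof is mostly bookkeeping. The one point requiring care is the matching of hypotheses: property (i) of the weak subdifferential demands a \emph{strict} global minimum, and it is precisely the strict inequality in Ekeland's conclusion (ii) that supplies this — a mere (non-strict) minimizer would not let us invoke Definition~\ref{WSUBDIFF}(i). Everything else, namely boundedness below and the two norm estimates, follows directly from Fact~\ref{f:variational} and Lemma~\ref{SIMPLE}.
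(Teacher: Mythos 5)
Your proof is correct and follows essentially the same route as the paper's: apply Ekeland's principle to get $s$, note that $g+h$ with $h:=\beta\|\cdot-s\|$ has a strict global minimum at $s$, invoke Definition~\ref{WSUBDIFF}(i) and (ii), and read off the norm bound from Lemma~\ref{SIMPLE}. The only difference is that you spell out a few details the paper leaves implicit (properness and boundedness below of $g$, and extracting $g(s)\le g(u)$ from Ekeland's conclusion (i)), which is fine.
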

\begin{proof}
Define $h$ as in Lemma \ref{SIMPLE}, and let $s$ be as in Fact \ref{f:variational}.   Then Fact \ref{f:variational}(ii) implies that $g + h$ attains a strict global minimum at $s$, and Definition \ref{WSUBDIFF}(i) implies that $0 \in \partial_w(g + h)(s)$.   Since $h$ is convex and continuous, Definition \ref{WSUBDIFF}(ii) implies that $0 \in \partial_w g(s) + \partial h(s)$.   Thus, from Lemma \ref{SIMPLE}, there exists $z^* \in E^*$ such that $\|z^*\| \le \beta$ and $0 \in \partial_w g(s) + z^*$.   The result follows by taking $x^* := -z^*$.
\end{proof}
\smallbreak
We now prove a Br\o ndsted--Rockafellar property for weak subdifferentials.   Let $f\colon\ E \to \rbar$ be proper.  Following, e.g., \cite[p. 75]{zalinescu},  we define the \emph{Fenchel conjugate} of $f$, $f^*\colon\ E^* \to \rbar$ by $f^*(x^*) := \sup\big[x^* - f\big]$.

\begin{theorem}\label{ABR}
Let $E$ be a Banach space, $\alpha,\beta > 0$, $f:E \to \RX$ be proper and lower semicontinuous, $(u,u^*) \in E \times E^*$ and
\begin{equation}\label{ALPHAHBETA}
f(u) + f^*(u^*) \le \scal{u}{u^*} + \alpha\beta.
\end{equation}
Then there exists $(s,s^*) \in \gra \partial_w f$ such that $\|s - u\| \le \alpha$, $\|s^* - u^*\| \le \beta$ and $f(s) - \scal{s}{u^*} \le f(u) - \scal{u}{u^*}$.
\end{theorem}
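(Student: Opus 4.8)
The plan is to reduce everything to Lemma~\ref{BRLEM} by ``tilting'' $f$ with the linear functional $u^*$. I would set $g := f - u^*$, meaning $g(x) := f(x) - \scal{x}{u^*}$; since $u^*$ is continuous and linear, $g$ inherits lower semicontinuity and properness from $f$, and a direct computation gives $\inf_E g = -\sup_E[u^* - f] = -f^*(u^*)$.

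Next I would verify that hypothesis \eqref{ALPHAHBETA} is exactly what Lemma~\ref{BRLEM} needs for $g$. Because $f$ is proper, $f^*(u^*) > -\infty$, so \eqref{ALPHAHBETA} forces $f(u) < +\infty$ (hence $u \in \dom g$) and $f^*(u^*) < +\infty$ (hence $g$ is bounded below). Rearranging \eqref{ALPHAHBETA} gives $g(u) = f(u) - \scal{u}{u^*} \le -f^*(u^*) + \alpha\beta = \inf_E g + \alpha\beta$, which is precisely the hypothesis of Lemma~\ref{BRLEM}. Applying that lemma then yields $(s,x^*) \in \gra \partial_w g$ with $\|s - u\| \le \alpha$, $g(s) \le g(u)$ and $\|x^*\| \le \beta$; the middle inequality unwinds at once to $f(s) - \scal{s}{u^*} \le f(u) - \scal{u}{u^*}$, giving the third conclusion for free.

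It remains to push the membership $x^* \in \partial_w g(s)$ back to $\partial_w f$. For this I would invoke Definition~\ref{WSUBDIFF}(ii) with $h := -u^*$, which is a continuous convex (indeed linear) real function with $\partial h(s) = \{-u^*\}$, to obtain $\partial_w g(s) = \partial_w(f+h)(s) \subseteq \partial_w f(s) - u^*$. Setting $s^* := x^* + u^*$ then gives $(s,s^*) \in \gra \partial_w f$ with $\|s^* - u^*\| = \|x^*\| \le \beta$ and $\|s - u\| \le \alpha$, as required. I do not anticipate a genuine obstacle: the argument is a clean reduction, and the only point needing care is checking that \eqref{ALPHAHBETA} forces $g$ to be bounded below so that $\inf_E g$ is finite and the condition of Lemma~\ref{BRLEM} is meaningful.
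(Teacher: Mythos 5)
Your proposal is correct and follows essentially the same route as the paper's own proof: tilt $f$ by $u^*$, apply Lemma~\ref{BRLEM} to $g = f - u^*$, and use Definition~\ref{WSUBDIFF}(ii) to transfer the subgradient back to $\partial_w f$. The extra details you supply (that \eqref{ALPHAHBETA} forces $g$ to be proper and bounded below with $\inf_E g = -f^*(u^*)$, and that $g(s)\le g(u)$ yields the third conclusion) are correct and merely make explicit what the paper leaves implicit.
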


\begin{proof}From \eqref{ALPHAHBETA}, $(f - u^*)(u) \le \inf_E\big[f - u^*\big] + \alpha\beta$ and so Lemma \ref{BRLEM} gives $(s,x^*) \in \gra \partial_w(f - u^*)$ such that $\|s - u\| \le \alpha$ and $\|x^*\| \le \beta$.   From Definition \ref{WSUBDIFF}(ii), $\partial_w(f - u^*) \subseteq \partial_wf - u^*$, and so there exists $s^* \in \partial_wf(s)$ such that $x^* = s^* - u^*$.   This gives the desired result.
\end{proof}

\section{The $r_L$--density of the weak subdifferentials of certain functions}\label{RLDSUB}

We suppose that $\partial_w$ is a weak subdifferential as defined in Definition \ref{WSUBDIFF}.

\begin{definition}\label{NODOWN}
Let $f:E\rightarrow\RX$.   We say that $f$ has {\em insignificant downside} if there exist $a_0,b_0,c_0 \in \RR$ with $a_0 < \thalb$ and, for all $x \in E$, $f(x) \ge -a_0\|x\|^2 - b_0\|x\| - c_0$.
\end{definition}
\begin{theorem}\label{t:main}
Assume that $f:E\rightarrow\RX$ is proper, lower semicontinuous and has insignificant downside.  Then $\gra \partial_w f$ is stably $\rl$--dense in $E\times E^*$.
\end{theorem}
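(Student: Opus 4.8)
The plan is to run Ekeland's principle, in the packaged form of Lemma~\ref{BRLEM}, on a suitable convex perturbation of $f$. Fix an arbitrary $(y,y^*)\in E\times E^*$. I would introduce the continuous convex function $h:=j(\cdot-y)-\scal{\cdot}{y^*}$, whose subdifferential is $\partial h(x)=J(x-y)-y^*$, and set $g:=f+h$. The term $j(\cdot-y)=\thalb\|\cdot-y\|^2$ plays two roles at once. Since $f$ has insignificant downside with $a_0<\thalb$, the quadratic growth of $j$ dominates and forces $g$ to be bounded below and coercive: combining $f(x)\ge-a_0\|x\|^2-b_0\|x\|-c_0$ with $\thalb\|x-y\|^2-\scal{x}{y^*}\ge\thalb\|x\|^2-(\|y\|+\|y^*\|)\|x\|+\thalb\|y\|^2$ gives a lower bound whose leading term is $(\thalb-a_0)\|x\|^2$. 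Hence $g$ is proper, lower semicontinuous and bounded below, and every sublevel set $\{g\le\inf_E g+1\}$ is bounded, say yielding $\|x-y\|\le M_0$ for a constant $M_0=M_0(y,y^*)$ independent of the approximation parameters.

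For the density I would fix $\beta\in(0,1]$ and choose $u\in\dom g$ with $g(u)\le\inf_E g+\beta^2$. Applying Lemma~\ref{BRLEM} to $g$ with $\alpha:=\beta$ produces $(s,x^*)\in\gra\partial_w g$ with $\|s-u\|\le\beta$ and $\|x^*\|\le\beta$; since $\|u-y\|\le M_0$, this gives $\|s-y\|\le M_0+1$. The heart of the argument is the decomposition step: because $h$ is continuous convex, Definition~\ref{WSUBDIFF}(ii) yields $\partial_w g\subseteq\partial_w f+\partial h$, so $x^*=s^*+w^*-y^*$ for some $s^*\in\partial_w f(s)$ and $w^*\in J(s-y)$. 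Thus $(s,s^*)\in\gra\partial_w f$ and, crucially, $s^*-y^*=x^*-w^*$.

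It then remains to estimate $\rl$. Since $w^*\in J(s-y)$, relation \eqref{e:duality} gives $\rl(s-y,-w^*)=0$, i.e.\ $\|w^*\|=\|s-y\|$ and $\scal{s-y}{w^*}=\|s-y\|^2$; perturbing the second slot by $x^*$ with $\|x^*\|\le\beta$ and applying the triangle inequality yields
$$\rl(s-y,s^*-y^*)=\rl(s-y,x^*-w^*)\le\thalb\|x^*\|^2+2\|x^*\|\,\|s-y\|\le\thalb\beta^2+2\beta(M_0+1).$$
Moreover $\|s^*-y^*\|=\|x^*-w^*\|\le\beta+\|s-y\|\le M_0+2$. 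Setting $M:=M_0+2$ and letting $\beta\downarrow0$, the right-hand side tends to $0$ while both $\|s-y\|\le M$ and $\|s^*-y^*\|\le M$ persist, which is exactly the stable $\rl$--density of $\gra\partial_w f$ at $(y,y^*)$. I expect the main obstacle to be the bookkeeping needed to guarantee that $M$ can be chosen uniformly in $\beta$, so that the \emph{constrained} infimum (not merely the unconstrained one) vanishes, together with the duality-map identities underlying the displayed estimate. The reduction itself is clean, because the sum rule Definition~\ref{WSUBDIFF}(ii) does all the work of converting an approximate critical point of $g$ into a genuine element of $\gra\partial_w f$ with nearly null $\rl$-value.
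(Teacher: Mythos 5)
Your proposal is correct and follows essentially the same route as the paper: the same quadratic perturbation $j(\cdot-y)-y^*$, the same coercivity bound from the insignificant-downside condition, the same application of Lemma~\ref{BRLEM}, and the same decomposition via Definition~\ref{WSUBDIFF}(ii) combined with the duality-map identities to estimate $\rl(s-y,s^*-y^*)$ by $O(\beta)$ with uniformly bounded $(s-y,s^*-y^*)$. The only differences are bookkeeping choices (you take $\alpha=\beta$ and tolerance $\beta^2$ where the paper takes $\alpha=1$ and tolerance $\beta$), which do not change the argument.
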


\begin{proof} Let $(y,y^*) \in E \times E^*$ and $k := j(\cdot - y) - y^*$.   We note for future reference that, for all $s \in E$,
\begin{equation}\label{PARTIALK}
\partial k(s) = \partial j(s - y) - y^* = J(s - y) - y^*.
\end{equation}
Let $a_0,b_0,c_0$ be as in Definition \ref{NODOWN}, and write $a := \thalb - a_0 > 0$, $b := \|y\| + \|y^*\| + b_0$ and $c := c_0 - j(y)$.   Then, for all $x\in E$,
\begin{align*}
(f + k)(x)&\geq \thalb\|x - y\|^2 - \|y^*\|\|x\| - a_0\|x\|^2 - b_0\|x\| - c_0\\
&\geq \thalb\big(\|x\| - \|y\|\big)^2 - \|y^*\|\|x\| - a_0\|x\|^2 - b_0\|x\| - c_0
\end{align*}
thus
\begin{equation}\label{e:biggerx}
(f + k)(x)\geq a\|x\|^2 - b\|x\| - c \ge \min_{\lambda\in\RR}\big[a\lambda^2 - b\lambda - c\big] \in \RR.
\end{equation}
Let $m:=\inf_{E}(f + k) \in \RR$ and $M:=b/2a +\sqrt{b^2 + 4a(c + m + 1)}/2a + \|y\| + 2$. Now let $\varepsilon > 0$.  Choose $\beta\in ]0,1]$ such that $2M\beta<\varepsilon$. Then there exists $u \in E$ such that
\begin{equation}\label{e:almostmin}
(f + k)(u) \leq \inf_E(f + k) + \beta \leq m +1.
\end{equation}
\eqref{e:biggerx} and \eqref{e:almostmin} imply that $a\|u\|^2 - b\|u\| \le c + m + 1 $.   Thus, completing the square,
\begin{equation}\label{NORMU}
\|u\|\leq b/2a +\sqrt{b^2 + 4a(c + m + 1)}/2a = M - \|y\| - 2.
\end{equation}
Applying Lemma \ref{BRLEM} to \eqref{e:almostmin}, with $g := f + k$ and $\alpha = 1$,  gives $s\in E$ and $x^*\in\partial_w (f+k)(s)$ such that
\begin{equation}\label{e:notfar}
\|s-u\|\leq 1 \quand \|x^*\|\leq \beta.
\end{equation}
Combining this with \eqref{NORMU},
\begin{equation}\label{e:notfar1}
\|s - y\| \leq \|s-u\| + \|u\| + \|y\| \leq 1 + (M - 2) = M - 1.
\end{equation}
From Definition \ref{WSUBDIFF}(ii) and \eqref{PARTIALK},
$$\partial_w(f+ k)(s)\subseteq \partial_w f(s) + \partial k(s) = \partial_w f(s) + J(s - y) - y^*,$$
so there exists $s^*\in\partial_w f(s)$ such that $x^* - (s^* - y^*) \in J(s - y)$. Consequently, from the well known properties of $J$,
\begin{equation}\label{e:notfar2}
\scal{s - y}{x^* - (s^* - y^*)}=\|s - y\|^2 \quand \|(s^* - y^*) - x^*\|=\|s - y\|.
\end{equation}
It follows that $\scal{s - y}{s^* - y^*}=\scal{s - y}{x^*} - \|s - y\|^2$, and so 
\begin{equation}\label{SSTAR}
\thalb\|s - y\|^2 + \scal{s - y}{s^* - y^*}=\scal{s - y}{x^*} - \thalb\|s - y\|^2
\end{equation}
thus, from \eqref{e:notfar}, \eqref{e:notfar1} and \eqref{e:notfar2},
\begin{equation}\label{NORMSSTAR}
\|s^* - y^*\|\leq \|(s^* - y^*) - x^*\|+\|x^*\|\leq \|s - y\|+\beta \leq (M - 1) + 1 = M.
\end{equation}
It is clear from \eqref{e:notfar}, \eqref{e:notfar1}, \eqref{SSTAR} and \eqref{NORMSSTAR} that
\begin{align*}
r_L(s - y,s^* - y^*) &= \thalb \|s - y\|^2+\scal{s - y}{s^* - y^*} + \thalb\|s^* - y^*\|^2\\
&\leq \scal{s - y}{x^*}-\thalb \|s - y\|^2+\thalb (\|s - y\|+\beta)^2\\
&\leq 2\|s - y\|\beta +\thalb \beta^2 \leq 2\beta (M - 1) + \beta \leq 2M\beta<\varepsilon.
\end{align*}
The result now follows since, from \eqref{e:notfar1} and \eqref{NORMSSTAR}, $\|s - y\| \le M$ and $\|s^* - y^*\| \le M$.
\end{proof}

\begin{remark}
If $f$ is bounded below by a continuous affine functional then obviously $f$ has insignificant downside.   The proof of Theorem~\ref{t:main} is patterned after the proof in \cite[Theorem 9.3]{simonsprep} that the graph of the subdifferential of a proper, convex lower semicontinuous function is stably $r_L$--dense. 
\end{remark}

\begin{corollary}\label{FINDIM}
Assume that $E$ is finite dimensional, $\gra \partial_wf$ is closed and\break $f:E\rightarrow\RX$ is proper, lower semicontinuous and has insignificant downside.  Then $\gra \partial_w f - \gra (-J) = E\times E^*$ and $\ran(\partial_w f + J) = E^*$.
\end{corollary}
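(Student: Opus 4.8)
The plan is to derive both conclusions from Theorem~\ref{t:main} together with the exact-minimum characterization in Lemma~\ref{EXACTMIN}, using finite-dimensionality only to upgrade an infimum to an attained minimum. By Theorem~\ref{t:main}, $\gra\partial_w f$ is stably $\rl$--dense in $E\times E^*$. So, fixing an arbitrary $(y,y^*)\in E\times E^*$, I would obtain $M\ge 0$ with
$$\inf_{(s,s^*)\in\gra\partial_w f,\ \|s-y\|\le M,\ \|s^*-y^*\|\le M}\rl(s-y,s^*-y^*)=0.$$
The goal is to show that this infimum is in fact a minimum over all of $\gra\partial_w f$; once that is established, Lemma~\ref{EXACTMIN} immediately yields $\gra\partial_w f-\gra(-J)=E\times E^*$.

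The key (and only substantive) step is the compactness argument. Since $E$ is finite dimensional, so is $E^*$, and the set $C:=\menge{(s,s^*)\in E\times E^*}{\|s-y\|\le M,\ \|s^*-y^*\|\le M}$ is a product of two closed balls, hence compact. Because $\gra\partial_w f$ is assumed closed, the intersection $\gra\partial_w f\cap C$ is compact, and it is nonempty (otherwise the infimum above would be $+\infty$, not $0$). The map $(s,s^*)\mapsto\rl(s-y,s^*-y^*)$ is continuous, so its infimum over the compact set $\gra\partial_w f\cap C$ is attained; this attained value is $0$. Since $\rl\ge 0$ everywhere, the minimum over the \emph{whole} graph $\gra\partial_w f$ is therefore also $0$ and is attained. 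This verifies the hypothesis of Lemma~\ref{EXACTMIN} for every $(y,y^*)$, giving the first assertion $\gra\partial_w f-\gra(-J)=E\times E^*$.

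For the range statement I would specialize the first conclusion. Given an arbitrary $w^*\in E^*$, apply $\gra\partial_w f-\gra(-J)=E\times E^*$ at the point $(y,y^*)=(0,w^*)$: there exist $(s,s^*)\in\gra\partial_w f$ and $(t,t^*)\in\gra(-J)$ with $(s,s^*)-(t,t^*)=(0,w^*)$, so $t=s$ and $s^*-t^*=w^*$. Setting $v^*:=-t^*$, the condition $t^*\in -J(t)$ gives $v^*\in J(s)$, and then $s^*+v^*=s^*-t^*=w^*$ with $s^*\in\partial_w f(s)$. Hence $w^*\in\partial_w f(s)+J(s)\subseteq\ran(\partial_w f+J)$, and since $w^*$ was arbitrary, $\ran(\partial_w f+J)=E^*$. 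I expect no difficulty here beyond bookkeeping; the entire weight of the proof rests on the compactness argument that converts stable $\rl$--density into an exactly attained minimum.
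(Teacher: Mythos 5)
Your proposal is correct and follows essentially the same route as the paper: Theorem~\ref{t:main} plus finite-dimensional compactness (you phrase it as Weierstrass attainment of an infimum on a compact set, the paper as Bolzano--Weierstrass extraction of a convergent subsequence, but these are the same argument), then Lemma~\ref{EXACTMIN}, then the same specialization at $(0,w^*)$ for the range statement. No gaps.
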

\begin{proof}
Let $(y,y^*) \in E \times E^*$.   Theorem \ref{t:main} provides a bounded sequence\break $(s_n,s_n^*)_{n\in\NN}$ of elements of $\gra \partial_w f$ such that, for all $n \ge 1$, $\rl(s_n - y,s^*_n - y^*) < 1/n$, and the Bolzano-Weierstrass Theorem gives a  subsequence $(s_{n_k},s_{n_k}^*)_{k\in\NN}$ of $(s_n,s_n^*)_{n\in\NN}$ and $(s,s^*) \in E \times E^*$ such that $(s_{n_{k}},s_{n_{k}}^*) \to (s,s^*)$.   Since $\gra\partial_w f$ is closed, $(s,s^*) \in \gra\partial_w f$, and obviously $\rl(s - y,s^* - y^*)=0$.   It now follows from Lemma \ref{EXACTMIN} that $\gra \partial_w f - \gra (-J) = E\times E^*$.   Now let $z^*$ be an arbitrary element of $E^*$.   From what we have just proved, there exists $(s,s^*) \in \gra\partial_w f$ and $(x,x^*) \in \gra(-J)$ such that $(s - x,s^* - x^*) = (0,z^*)$.   This implies that $x = s$, and so $x^* \in -J(x) = -J(s)$.  Thus $z^* = s^* - x^* \in \partial_w f(s) + J(s) = (\partial_wf+ J)(s)$.   This completes the proof that $\ran(\partial_w f + J) = E^*$.
\end{proof}

\begin{example}\label{RCASE}
In this example, we suppose that $E = \RR$ and that $\partial_w$ has the special property that, whenever $f$ is a polynomial, $\partial_wf(x) = \{f^\prime(x)\}$.   For instance, $\partial_w$ could be the Clarke--Rockafellar subdifferential.  We note that, for all $x \in \RR$, $J(x) = \{x\}$ and, for all $(x,x^*) \in \RR \times \RR$, $r_L(x,x^*) = \thalb(x + x^*)^2$.
\begin{enumerate}
\item If $\lambda < \thalb$ and $f(x) := -\lambda x^2$ then $f$ has insignificant downside, and so Corollary \ref{FINDIM} can be applied.
\item If $f(x) := -\thalb x^2$ then, for all $x \in \RR$, $(\partial_w f + J)(x) = \{-x + x\} = \{0\}$.   Thus $\ran(\partial_w f + J) \ne \RR$.    Thus the second conclusion of Corollary~\ref{FINDIM} fails and, working backwards, the conclusion of  Theorem~\ref{t:main} also fails.
\par
\item If $\lambda > \thalb$ and $f(x) := -\lambda x^2$ then $f$ does not have significant downside.   Nevertheless, given $(y,y^*) \in \RR \times \RR$, let $s = (y + y^*)/(1 - 2\lambda)$.   Then $(s,-2\lambda s) \in \gra \partial_w f$ and $r_L(s - y,-2\lambda s - y^*) = \thalb(s - y -2\lambda s - y^*)^2 = 0$, so $\gra \partial_w f$ is stably $r_L$--dense.
\par
\item Let $n$ be an odd integer, $n \ge 3$ and $f(x) := x^n$.   Obviously $f$ does not have insignificant downside.   Note that $(\partial_w f + J)(x) = \{nx^{n - 1} + x\}$.   Since $n - 1$ is an even integer and $n - 1 \ge 2$,     $\ran(\partial_w f + J) \ne \RR$.   Thus the second conclusion of Corollary~\ref{FINDIM} fails and, working backwards, the conclusion of  Theorem~\ref{t:main} also fails.
\end{enumerate}
\end{example}
\section{Approximate Minty type results}\label{MINTYsec}
We start this section by recalling the classical Br\o ndsted--Rockafellar theorem, which can actually be deduced from Theorem~\ref{ABR}.
\begin{fact}[See {\cite[pp. 608--609]{BRON}}]\label{BRfact}
Let $E$ be a Banach space, $\alpha,\beta > 0$, $f:E \to \RX$ be proper, convex and lower semicontinuous, $(u,u^*) \in E \times E^*$ and
$$f(u) + f^*(u^*) \le \scal{u}{u^*} + \alpha\beta.$$
Then there exists $(t,t^*) \in \gra \partial f$ such that $\|t - u\| \le \alpha$ and $\|t^* - u^*\| \le \beta$.
\end{fact}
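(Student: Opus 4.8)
The plan is to obtain Fact~\ref{BRfact} as a direct specialization of Theorem~\ref{ABR}. The only gap between the two statements is that Theorem~\ref{ABR} is phrased for an abstract weak subdifferential $\partial_w$, whereas Fact~\ref{BRfact} is phrased for the \emph{classical} convex subdifferential $\partial$. So the entire task reduces to exhibiting one weak subdifferential that coincides with $\partial$ on proper convex lower semicontinuous functions, applying Theorem~\ref{ABR}, and then reading off the conclusion.

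First I would fix $\partial_w$ to be the Clarke--Rockafellar subdifferential, which the discussion following Definition~\ref{WSUBDIFF} already records as a weak subdifferential. The fact I would invoke is the standard one that, for a proper convex lower semicontinuous $f$, the Clarke--Rockafellar subdifferential reduces to the subdifferential of convex analysis, so that $\gra \partial_w f = \gra \partial f$ (see, e.g., \cite[Section 2.9]{clarke}). With this identification in hand, notice that the hypothesis of Fact~\ref{BRfact} is \emph{verbatim} the hypothesis \eqref{ALPHAHBETA} of Theorem~\ref{ABR}, so no preprocessing of the data $(u,u^*)$ is needed.

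Applying Theorem~\ref{ABR} then produces $(s,s^*) \in \gra \partial_w f = \gra \partial f$ with $\|s - u\| \le \alpha$ and $\|s^* - u^*\| \le \beta$. Theorem~\ref{ABR} in fact also supplies the inequality $f(s) - \scal{s}{u^*} \le f(u) - \scal{u}{u^*}$, which Fact~\ref{BRfact} does not require; I would simply discard it. Setting $(t,t^*) := (s,s^*)$ then completes the deduction.

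Since the argument is essentially a substitution, there is no serious obstacle, and the single delicate ingredient is the coincidence $\partial_w f = \partial f$ on convex functions. I would, however, flag one pitfall worth ruling out explicitly: the convex-analytic (Fenchel) subdifferential, applied by its defining inequality to \emph{all} proper lower semicontinuous functions, is \emph{not} itself a weak subdifferential, because the sum-rule inclusion of Definition~\ref{WSUBDIFF}(ii) can fail for nonconvex $f$ (already for $f = -|\cdot|$ and $h = 2|\cdot|$ on $\RR$, where $\partial(f+h)(0) = [-1,1]$ but $\partial f(0) = \varnothing$). This is precisely why the deduction must route through a genuine weak subdifferential such as the Clarke--Rockafellar one, rather than through $\partial$ directly.
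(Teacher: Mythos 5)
Your deduction is correct, and it is precisely the route the paper itself gestures at: the sentence immediately preceding Fact~\ref{BRfact} says the classical Br\o ndsted--Rockafellar theorem ``can actually be deduced from Theorem~\ref{ABR}'', but the paper never writes the deduction out and instead just cites the original source \cite{BRON}. Your write-up supplies the missing details, and all the ingredients are already on record in the paper: that $\partial_{CR}$ is a weak subdifferential is stated after Definition~\ref{WSUBDIFF}, and the identification $\partial_{CR}f=\partial f$ for proper convex lower semicontinuous $f$ (via \cite[Theorem 5]{Rock80}) is exactly the one the paper invokes later in its proof of Rockafellar's maximal monotonicity theorem. There is no circularity, since Theorem~\ref{ABR} rests only on Ekeland's principle and the two axioms of Definition~\ref{WSUBDIFF}, not on Fact~\ref{BRfact}. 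Your closing caveat --- that the Fenchel subdifferential applied to arbitrary lower semicontinuous functions is not itself a weak subdifferential, so one must route through $\partial_{CR}$ --- is accurate and worth flagging, though strictly speaking only axiom (ii) fails in your example (axiom (i) still holds for the Fenchel subdifferential at a strict global minimizer). The only thing your route ``costs'' relative to the paper's citation is that it proves slightly less than the sharpest known forms of the Br\o ndsted--Rockafellar theorem, but it proves exactly what Fact~\ref{BRfact} asserts, which is all that is needed for Theorem~\ref{APPMINthm}.
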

\begin{theorem}\label{APPMINthm}
Let $E$ be a Banach space and $A$ be an $r_L$--dense subset of $E \times E^*$. Then $A - \gra (-J)$ is dense in $E \times E^*$.
\end{theorem}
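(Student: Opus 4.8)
The plan is to reduce the statement to the classical Br\o ndsted--Rockafellar theorem (Fact~\ref{BRfact}) applied to the single convex function $j = \thalb\|\cdot\|^2$, whose subdifferential is exactly the duality map $J$. The key observation is that a small value of $\rl$ is precisely a near-equality in the Fenchel--Young inequality for $j$, which is the hypothesis that drives Fact~\ref{BRfact}.

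First I would fix a target $(y,y^*) \in E \times E^*$ and $\varepsilon > 0$ and unwind what density of $A - \gra(-J)$ demands: I must produce $(s,s^*) \in A$ and $(x,x^*) \in \gra(-J)$ with $(s - x, s^* - x^*)$ within $\varepsilon$ of $(y,y^*)$ in each coordinate. Writing $a := s - y$ and $a^* := s^* - y^*$, this is the same as producing, for a suitable $(s,s^*) \in A$, a point $(x,x^*) \in \gra(-J)$ lying within $\varepsilon$ of $(a,a^*)$ coordinatewise, since $(s-x)-y = a - x$ and $(s^*-x^*)-y^* = a^* - x^*$.

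By $\rl$--density, choose $(s,s^*) \in A$ with $\rl(a,a^*) < \varepsilon^2$. Now set $u := a$ and $u^* := -a^*$. Since $j^* = \thalb\|\cdot\|^2$ is even, $j(u) + j^*(u^*) - \scal{u}{u^*} = j(a) + j^*(a^*) + \scal{a}{a^*} = \rl(a,a^*) < \varepsilon^2$, so the hypothesis of Fact~\ref{BRfact} holds for $f = j$ with $\alpha = \beta = \varepsilon$. This yields $(t,t^*) \in \gra J$ with $\|t - u\| \le \varepsilon$ and $\|t^* - u^*\| \le \varepsilon$. Setting $x := t$ and $x^* := -t^*$ gives $(x,x^*) \in \gra(-J)$ by the characterization in \eqref{e:duality}, and the two norm bounds translate directly into $\|x - a\| = \|t - u\| \le \varepsilon$ and $\|x^* - a^*\| = \|t^* - u^*\| \le \varepsilon$. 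Hence $(s,s^*) - (x,x^*) \in A - \gra(-J)$ lies within $\varepsilon$ of $(y,y^*)$ in each coordinate, which is the desired density.

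I do not anticipate a serious obstacle. Once the substitution $u = a$, $u^* = -a^*$ is made, the equivalence between ``$\rl$ small'' and ``the Fenchel--Young gap for $j$ is small'' is immediate, and the sign flip in the dual coordinate is exactly what converts $\gra J$ into $\gra(-J)$. The only points requiring care are the coordinate bookkeeping and confirming that the strict inequality $\rl(a,a^*) < \varepsilon^2$ genuinely activates Fact~\ref{BRfact} with $\alpha\beta = \varepsilon^2$; since the hypothesis there is stated as an inequality, this causes no difficulty. If separate control of the primal and dual radii were wanted, one could instead split $\varepsilon^2 = \alpha\beta$ with unequal $\alpha,\beta$.
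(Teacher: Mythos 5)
Your proposal is correct and follows essentially the same route as the paper: both apply the classical Br\o ndsted--Rockafellar theorem (Fact~\ref{BRfact}) to $f=j$ with $u=s-y$ and $u^*=y^*-s^*$ (your $u^*=-a^*$ is the same point), using that a small value of $\rl(s-y,s^*-y^*)$ is exactly a small Fenchel--Young gap for $j$. The only cosmetic difference is your calibration $\alpha=\beta=\varepsilon$ with tolerance $\varepsilon^2$ versus the paper's tolerance $\varepsilon$ with resulting bounds $\sqrt{\varepsilon}$.
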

\begin{proof}
Let $(y,y^*) \in E \times E^*$ and $\varepsilon > 0$.   By hypothesis, there exists $(s,s^*)\in A$ such that $\rl(s - y,s^* - y^*) \le \varepsilon$, which can be rewritten:
$$j(s - y) + j^*(y^*- s^*) \le \scal{s - y}{y^*- s^*} + \varepsilon.$$
From Fact~\ref{BRfact} with $f := j$, $u := s - y$ and $u^* := y^* - s^*$, there exists $(t,t^*) \in \gra J$ such that
$$\|(s - t) - y\| = \|t - (s - y)\| \le \sqrt{\varepsilon}$$
and
$$\|(s^* + t^*) - y^*\| = \|t^* - (y^* - s^*)\| \le \sqrt{\varepsilon}.$$
Thus
$$\|(s - t,s^* + t^*) - (y,y^*)\|^2 = \|(s - t) - y\|^2 + \|(s^* + t^*) - y^*\|^2 \le 2\varepsilon.$$
Now $(t,-t^*) \in G(-J)$, and so $(s - t,s^* + t^*) \in A - \gra (-J)$.   Since $\varepsilon$ can be made arbitrarily small, it follows that $A-\gra (-J)$ is dense in $E \times E^*$.
\end{proof}
\begin{remark}\label{APPMINrem}
We do not know if the converse of Theorem~\ref{APPMINthm} is true.   In other words, if $A - \gra (-J)$ is dense in $E \times E^*$ then does it follow that $A$ is an $r_L$--dense subset of $E \times E^*$?   It is worth pointing out that if $A - \gra (-J) = E \times E^*$ then, for all $(y,y^*) \in E \times E^*$, there exists $(s,s^*) \in A$ and $(t,t^*) \in \gra(J)$ such that $(s - t,s^* + t^*) = (y,y^*)$.   But then $(s - y,y^* - s^*) = (t,t^*) \in \gra(J)$, and so $j(s - y) + j^*(y^* - s^*) = \scal{s - y}{y^* - s^*}$, that is to say $r_L(s - y,s^* - y^*) = 0$.   There are certain technical problems proving the ``approximate'' version of this.
\end{remark}

For the rest of this section, we examine the special results that are true in the Hilbert space case.

\begin{proposition}\label{CLOSEDprop}
Let $H$ be a Hilbert space, and $S:H \To H$ be a set-valued mapping.  Then $\ran(S + \Id)=H$ if, and only if,
\begin{equation}\label{MINCON}
\all\ (y,y^*)\in H\times H,\ \min_{(s,s^*)\in \gra(S)}
\rl(s - y,s^* - y^*)=0.
\end{equation}
\end{proposition}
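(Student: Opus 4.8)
The plan is to prove Proposition~\ref{CLOSEDprop} by establishing the two implications separately, exploiting the special structure of a Hilbert space $H$ where we may identify $H^* = H$ and where $J = \Id$ (since $j = \thalb\|\cdot\|^2$ gives $\partial j = \Id$ in the self-dual setting). The crucial simplification is that, in the Hilbert space case, \eqref{e:duality} reads $\rl(x,x^*) = 0 \Leftrightarrow x^* = -x$, because $-J(x) = \{-x\}$. Indeed, expanding, $\rl(x,x^*) = \thalb\|x\|^2 + \thalb\|x^*\|^2 + \scal{x}{x^*} = \thalb\|x + x^*\|^2$, so $\rl(x,x^*) = 0$ is equivalent to $x^* = -x$. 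This identity is what makes the minimum (rather than merely an infimum) attainable.

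For the forward direction, I would assume $\ran(S + \Id) = H$. Given any $(y,y^*) \in H \times H$, I apply the hypothesis to the point $y + y^* \in H$: there exists $s \in \dom S$ and $s^* \in S(s)$ with $s^* + s = y + y^*$, i.e. $s + s^* = y + y^*$. Rearranging gives $(s^* - y^*) = -(s - y)$, and so by the identity above $\rl(s - y, s^* - y^*) = \thalb\|(s-y)+(s^*-y^*)\|^2 = 0$. Since $(s,s^*) \in \gra(S)$ achieves the value zero and $\rl \ge 0$ always, the minimum is attained and equals $0$, establishing \eqref{MINCON}.

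For the reverse direction, I would assume \eqref{MINCON} holds and show $\ran(S + \Id) = H$. Let $z \in H$ be arbitrary; I must find $x \in H$ with $z \in (S + \Id)(x)$, i.e. $z - x \in S(x)$. The natural choice is to feed a suitable $(y,y^*)$ into \eqref{MINCON} so that the minimizer $(s,s^*)$ yields $s^* = z - s$. Working backwards from the target, I want the minimizer to satisfy $s^* - y^* = -(s - y)$, i.e. $s + s^* = y + y^*$, together with $s^* = z - s$; these force $y + y^* = z$. So I would set, for instance, $y := 0$ and $y^* := z$ (any split with $y + y^* = z$ works). Applying \eqref{MINCON} to this $(y,y^*)$ gives $(s,s^*) \in \gra(S)$ with $\rl(s - y, s^* - y^*) = 0$, hence $s^* - y^* = -(s - y)$, which rearranges to $s + s^* = y + y^* = z$. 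Therefore $s^* = z - s \in S(s)$, so $z = s + s^* \in (S + \Id)(s) \subseteq \ran(S + \Id)$.

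I expect no serious obstacle here, since the proposition is essentially a Hilbert-space reformulation of Lemma~\ref{EXACTMIN}: in $H$ the duality map is the identity, so $\gra S - \gra(-J) = \gra S - \gra(-\Id)$, and the condition $\gra S - \gra(-\Id) = H \times H$ unpacks precisely to surjectivity of $S + \Id$ on the range side. The only point requiring care is the bookkeeping between the two components of the difference of graphs and the single surjectivity condition on $\ran(S + \Id)$, together with confirming that the minimum is genuinely attained (not just an infimum), which is guaranteed by the exact identity $\rl(x,x^*) = \thalb\|x + x^*\|^2$ peculiar to the Hilbert setting.
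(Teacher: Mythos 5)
Your proof is correct and follows essentially the same route as the paper: both directions hinge on the Hilbert-space identity $\rl(x,x^*)=\thalb\|x+x^*\|^2$, the forward implication applies surjectivity to $y+y^*$, and the reverse implication specializes \eqref{MINCON} to $y=0$. No substantive differences.
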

\begin{proof}
``$\Rightarrow$'' Assume that $\ran (S + \Id) = H$. Let $(y,y^*)\in H\times H$.   Since $y + y^* \in H$, there exists $(s,s^*) \in \gra(S)$ such that $s^* + s =y + y^*$, so $s-y = -(s^*-y^*)$. We have
\begin{align*}
\rl(s-y,s^*-y^*)
&= \thalb\|s-y\|^2+\thalb\|s^*-y^*\|^2+\scal{s-y}{s^*-y^*}\\
&= \|s-y\|^2-\|s-y\|^2=0.
\end{align*}

``$\Leftarrow$" Assume that \eqref{MINCON} is satisfied and $y^* \in H$. Then, from \eqref{MINCON} with $y = 0$, there exists $(s,s^*) \in \gra(S)$ such that $\rl(s,s^* - y^*)=0$, that is,
$$0=\thalb\|s\|^2+\thalb\|s^* - y^*\|^2 + \scal{s}{s^* - y^*}=
\thalb\|s + s^* - y^*\|^2.$$
Thus, $y^* = s^* + s \in \ran(S + \Id)$. Since $y^* \in H$ was arbitrary,
we conclude that $\ran(S + \Id) = H$.
\end{proof}

\begin{proposition}\label{p:almostclose}
Let $H$ be a Hilbert space, and $S:H \To H$ be a set-valued mapping.   Then $\ran(S + \Id)$ is dense in $H$ if, and only if, 
\begin{equation}\label{INFCON}
\all\ (y,y^*)\in H\times H,\  \inf_{(s,s^*)\in \gra(S)}
\rl(s - y,s^* - y^*)=0.
\end{equation}
\end{proposition}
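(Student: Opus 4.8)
The plan is to mimic the proof of Proposition~\ref{CLOSEDprop}, replacing its exact equalities by approximate inequalities and exploiting the fundamental Hilbert-space identity $\rl(a,b) = \thalb\|a + b\|^2$, which holds because, under the identification $H = H^*$, we have $j = j^* = \thalb\|\cdot\|^2$ and hence $\rl(a,b) = \thalb\|a\|^2 + \thalb\|b\|^2 + \scal{a}{b} = \thalb\|a+b\|^2$. This identity converts every statement about $\rl$ into a statement about the distance from $s + s^*$ to $y + y^*$, which is precisely the quantity that density of $\ran(S + \Id)$ controls.

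First I would handle ``$\Rightarrow$''. Fix $(y,y^*) \in H \times H$ and $\varepsilon > 0$. Since $\ran(S + \Id)$ is dense in $H$, the point $y + y^* \in H$ can be approximated, so there exists $(s,s^*) \in \gra(S)$ with $\|(s + s^*) - (y + y^*)\| \le \sqrt{2\varepsilon}$. The identity then gives
$$\rl(s - y,s^* - y^*) = \thalb\big\|(s - y) + (s^* - y^*)\big\|^2 = \thalb\big\|(s + s^*) - (y + y^*)\big\|^2 \le \varepsilon.$$
As $\varepsilon > 0$ is arbitrary and $\rl \ge 0$ always, the infimum in \eqref{INFCON} is $0$.

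For ``$\Leftarrow$'', I would take an arbitrary $y^* \in H$ and $\varepsilon > 0$ and apply \eqref{INFCON} to the pair $(0,y^*)$, obtaining $(s,s^*) \in \gra(S)$ with $\rl(s,s^* - y^*) \le \varepsilon$. The identity now reads $\thalb\|(s^* + s) - y^*\|^2 = \rl(s,s^* - y^*) \le \varepsilon$, so the point $s^* + s \in \ran(S + \Id)$ lies within $\sqrt{2\varepsilon}$ of $y^*$. Letting $\varepsilon \downarrow 0$ shows $y^*$ belongs to the closure of $\ran(S + \Id)$, and since $y^*$ was arbitrary, $\ran(S + \Id)$ is dense in $H$.

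I do not expect a genuine obstacle here: the argument is a routine $\varepsilon$-softening of Proposition~\ref{CLOSEDprop}, and the only point requiring a little care is the bookkeeping between $\rl$ (which is quadratic) and the norm distance (which is linear), i.e.\ tracking the factor $\thalb$ and the resulting passage between $\varepsilon$ and $\sqrt{2\varepsilon}$. The essential content is carried entirely by the identity $\rl(a,b) = \thalb\|a + b\|^2$, so both implications reduce to the elementary observation that $\rl(s-y,s^*-y^*)$ is small exactly when $s + s^*$ is close to $y + y^*$.
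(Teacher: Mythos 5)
Your proposal is correct and follows essentially the same route as the paper's own proof: both directions rest on the identity $\rl(a,b)=\thalb\|a+b\|^2$ in Hilbert space, with the forward implication approximating $y+y^*$ by some $s+s^*\in\ran(S+\Id)$ and the reverse implication applying \eqref{INFCON} at $(0,y^*)$. No gaps.
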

\begin{proof}
``$\Rightarrow$" Assume that $\ran(S + \Id)$ is dense in $H$, $(y,y^*) \in H\times H$ and $\varepsilon>0$. Since $y + y^*\in H$, we see that there exists $(s,s^*) \in \gra(S)$ such that $\|s^* + s - (y + y^*)\|<\sqrt{2\varepsilon}$.
We have
\begin{align*}
\rl(s-y,s^*-y^*) &=\thalb\|s-y\|^2+\thalb\|s^*-y^*\|^2+\scal{s-y}{s^*-y^*}\\
& =
\thalb\|s-y+s^*-y^*\|^2=\thalb\|s^* + s - (y + y^*)\|^2<\varepsilon.
\end{align*}
Since $\varepsilon>0$ was arbitrary,
we have $\inf_{(y,y^*)\in \gra(S)}
\rl(s-y,s^*-y^*)=0.$

``$\Leftarrow$" Assume that \eqref{INFCON} is satisfied, $y^* \in H$ and $\varepsilon>0$. Then, from \eqref{INFCON} with $y = 0$, there exists $(s,s^*) \in \gra(S)$ such that $\rl(s,s^* - y^*)<\varepsilon$, that is,
\begin{align*}
\varepsilon & >\thalb\|s\|^2+\thalb\|s^*-y^*\|^2+\scal{s}{s^*-y^*} = \thalb\|s + s^*-y^*\|^2.
\end{align*}
Since $s + s^* = s^* + s \in \ran(A+\Id)$, $y^* \in H$ and $\varepsilon>0$ was arbitrary, we conclude that $\ran(A+\Id)$ is dense in $H$.
\end{proof}

\begin{remark}
We give a simple example to illustrate the difference between the conditions of Propositions~\ref{CLOSEDprop} and \ref{p:almostclose}.   If $\bx = (x_n)_{n\in \NN} \in \ell^2$, define\break $S\bx := -\bx + (x_n/n)_{n\in \NN} \in \ell^2$.   Then $(S + \Id)\bx = (x_{n}/n)_{n\in \NN}$.   Obviously, $(1/n)_{n\in\NN}\in\ell^2\setminus \ran(S+\Id)$ but, by considering finitely nonzero sequences, we can see that $\ran(S + \Id)$ is dense in $\ell^2$. 
\end{remark}

\section{Monotone polars}\label{POLsec}
Let $A \subseteq E \times E^*$ and $(x,x^*) \in E \times E^*$.   Following Phelps, \cite{PRAGUE}, we say that $(x,x^*)$ is {\em monotonically related to} $A$ when, for all $(s,s^*) \in A$, $\scal{s - x}{s^* - x^*}\geq 0$.   The {\em monotone polar of} $A$ is the set of all elements of $E \times E^*$ that are monotonically related to $A$.   This set has been introduced by various authors over the years.   It appears in \cite[p.\ 191]{RANGE} in a more abstract version;  it appears in Mart\'\i nez-Legaz--Svaiter, \cite[p.\ 32]{MLS} under the notation $A^\mu$; it also appears in \cite[p.\ 1737]{lassonde} under the notation $A^0$.  We will use the notation $A^\mu$ since the other notation has so many different meanings.   Thus we have  
$$A^\mu := \menge{(x,x^*)\in E\times E^*}{\all\ (s,s^*)\in A,\ \scal{s - x}{s^* - x^*} \ge 0}.$$
Then, of course, $A$ is \emph{monotone} exactly when $A \subseteq A^\mu$, and $A$ is \emph{maximally monotone} exactly when $A = A^\mu$.

\begin{lemma}\label{BARlem}
Let $A$ be an $r_L$--dense subset of $E \times E^*$.   Then, writing $\overline{A}$ for the norm--closure of $A$, $A^\mu \subseteq \overline{A}$.
\end{lemma}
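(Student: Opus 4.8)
The goal is to show that if $A$ is $r_L$--dense, then every point monotonically related to $A$ already lies in the norm-closure $\overline{A}$. The plan is to argue by contraposition on membership: I will take an arbitrary $(x,x^*) \in A^\mu$ and produce, for each $\varepsilon > 0$, an element of $A$ within norm-distance controlled by $\varepsilon$ of $(x,x^*)$, thereby placing $(x,x^*)$ in $\overline{A}$.

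The key algebraic idea is to apply the $r_L$--density of $A$ not at the point $(x,x^*)$ itself, but at the \emph{reflected} point $(y,y^*) := (x,-x^*)$. First I would write out what $r_L$--density gives: for this choice of $(y,y^*)$, there exists $(s,s^*) \in A$ with
$$r_L(s - x,\, s^* + x^*) = \thalb\|s - x\|^2 + \thalb\|s^* + x^*\|^2 + \scal{s - x}{s^* + x^*}$$
as small as we like. The crucial step is then to rewrite the inner-product term by splitting $s^* + x^* = (s^* - x^*) + 2x^*$, which produces
$$\scal{s - x}{s^* + x^*} = \scal{s - x}{s^* - x^*} + 2\scal{s - x}{x^*}.$$
Here is where the hypothesis $(x,x^*) \in A^\mu$ enters decisively: since $(s,s^*) \in A$, monotone-relatedness forces $\scal{s - x}{s^* - x^*} \ge 0$. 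I would need to be slightly careful about the sign and the role of the $2\scal{s-x}{x^*}$ term, so the likely correct reflection is $(y,y^*) := (x,x^*)$ directly, giving
$$r_L(s - x, s^* - x^*) = \thalb\|s - x\|^2 + \thalb\|s^* - x^*\|^2 + \scal{s - x}{s^* - x^*}.$$
Each of the three summands is nonnegative (the first two trivially, the third by $(x,x^*) \in A^\mu$), so the infimum being $0$ forces $\thalb\|s-x\|^2 + \thalb\|s^*-x^*\|^2$ to be arbitrarily small, which is exactly norm-convergence of $(s,s^*)$ to $(x,x^*)$.

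Concretely, the steps in order are: (1) fix $(x,x^*) \in A^\mu$ and $\varepsilon > 0$; (2) invoke Definition~\ref{RLDENSE} with $(y,y^*) = (x,x^*)$ to obtain $(s,s^*) \in A$ with $r_L(s - x, s^* - x^*) < \varepsilon$; (3) use $(s,s^*) \in A$ together with $(x,x^*) \in A^\mu$ to conclude $\scal{s - x}{s^* - x^*} \ge 0$; (4) deduce $\thalb\|s - x\|^2 + \thalb\|s^* - x^*\|^2 \le r_L(s - x, s^* - x^*) < \varepsilon$; (5) conclude $\|(s,s^*) - (x,x^*)\|^2 < 2\varepsilon$, so $(x,x^*) \in \overline{A}$. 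Since $(x,x^*) \in A^\mu$ was arbitrary, $A^\mu \subseteq \overline{A}$.

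The main obstacle is essentially bookkeeping rather than depth: I must verify that the three-term decomposition of $r_L$ is the one that aligns the monotone-polar inequality with a \emph{lower} bound on $r_L$ by the sum of the two squared norms, and that no stray cross-term (such as the $2\scal{s-x}{x^*}$ that would appear under a reflection) spoils the nonnegativity. Getting the point at which density is applied exactly right — so that the monotone-relatedness inner product appears with the correct sign inside $r_L$ — is the only subtle point; once that alignment is fixed, the estimate controlling $\|s - x\|$ and $\|s^* - x^*\|$ by $r_L$ is immediate and the closure conclusion follows.
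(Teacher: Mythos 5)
Your proposal is correct and, after you discard the initial ``reflected point'' digression in favour of applying Definition~\ref{RLDENSE} at $(y,y^*)=(x,x^*)$ itself, it is exactly the paper's proof: obtain $(s,s^*)\in A$ with $r_L(s-x,s^*-x^*)<\varepsilon$, use $(x,x^*)\in A^\mu$ to drop the nonnegative inner-product term, and conclude $\thalb\|s-x\|^2+\thalb\|s^*-x^*\|^2<\varepsilon$. No differences worth noting.
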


\begin{proof}
Let $(x,x^*) \in A^\mu$ and $\varepsilon > 0$.   Since $A$ is $\rl$-dense, there exists $(s,s^*)\in A$ such that
$$\thalb\|s - x\|^2+\thalb\|s^* - x^*\|^2 + \scal{s - x}{s^* - x^*} = r_L\big((s,s^*) - (x,x^*)\big) < \varepsilon.$$
Now $(x,x^*) \in A^\mu$ and $(s,s^*)\in A$, and so $\scal{s - x}{s^* - x^*} \ge 0$.   Thus
$$\thalb\|(s,s^*) - (x,x^*)\|^2 = \thalb\|s - x\|^2 + \thalb\|s^* - x^*\|^2 < \varepsilon.$$
Since $\varepsilon>0$ was arbitrary, $(x,x^*) \in \overline{A}$.
\end{proof}

\begin{theorem}\label{CLOSEDRLMAXthm}
Let $S\colon\ E \To E^*$ and $\gra S$ be a closed, $\rl$-dense {\em monotone} subset of $E\times E^*$.   Then $S$ is maximally monotone.
\end{theorem}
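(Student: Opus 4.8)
The plan is to use the characterization of maximal monotonicity recorded immediately before the statement: writing $A := \gra S$, the operator $S$ is maximally monotone precisely when $A = A^\mu$. Since $S$ is assumed to be monotone, the inclusion $A \subseteq A^\mu$ comes for free from the definition, so the entire task reduces to establishing the reverse inclusion $A^\mu \subseteq A$.

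For that reverse inclusion I would invoke Lemma~\ref{BARlem} directly. Because $A = \gra S$ is $\rl$--dense by hypothesis, that lemma yields $A^\mu \subseteq \overline{A}$, where $\overline{A}$ denotes the norm--closure of $A$. But $\gra S$ is assumed to be closed, so $\overline{A} = A$, and hence $A^\mu \subseteq A$. Combining this with the monotonicity inclusion gives $A = A^\mu$, which is exactly the assertion that $S$ is maximally monotone.

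In short, the structural work has all been done in Lemma~\ref{BARlem}, whose proof is where the real content sits: there one uses that a point monotonically related to $A$ pairs nonnegatively against every element of $A$, so the cross term $\scal{s - x}{s^* - x^*}$ in the $\rl$--small approximant can be discarded, forcing norm--closeness. I do not anticipate any serious obstacle at the level of this theorem; the only step to watch is making sure the closedness hypothesis is what upgrades $\overline{A}$ back to $A$, since without it one would only obtain $A^\mu \subseteq \overline{A}$ and hence maximal monotonicity of the closure rather than of $S$ itself.
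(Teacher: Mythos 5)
Your proposal is correct and follows exactly the paper's own argument: apply Lemma~\ref{BARlem} to get $(\gra S)^\mu \subseteq \overline{\gra S} = \gra S$ by closedness, and combine with the inclusion $\gra S \subseteq (\gra S)^\mu$ from monotonicity to conclude $(\gra S)^\mu = \gra S$. Nothing to add.
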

\begin{proof}
From Lemma \ref{BARlem}, $(\gra S)^\mu \subseteq \overline{\gra S}$.  Consequently, since $\gra S$ is closed, $(\gra S)^\mu \subseteq \gra S$.  The opposite inclusion follows from the monotonicity of $S$, and so $(\gra S)^\mu = \gra S$.   Consequently, $\gra S$ is a maximally monotone subset of $E \times E^*$, from which $S$ is maximally monotone.
\end{proof}

\begin{remark}
If $S\colon\ E \To E^*$ is maximally monotone and either $\ran S = E^*$ or $E$ is reflexive then $\gra S$ is $r_L$--dense in $E \times E^*$.   See \cite[Theorems~6.5(b) and 6.6(b)]{simonsprep}.   In the latter case, in fact $S$ is maximally monotone if and only if for every $(x,x^*)\in E\times E^*$, $\min_{(s,s^*)\in \gra S}\rl(s - x,s^* - x^*)=0$.   See \cite[Theorem~10.3, p.\ 36]{MANDM}.   If $S\colon\ E \To E^*$ is maximally monotone and $\dom S = E$ then it does {\em not} follow that $\gra S$ is $r_L$--dense in $E \times E^*$, even if $S$ is single--valued and linear:   Define $S\colon\ \ell^1 \mapsto \ell^\infty = E^*$ by $(Sx)_n = \sum_{k \ge n} x_k$ ($S$ is the ``tail'' operator).  \big(See \cite[Example~11.5, pp.\ 283--284]{BR}.\big)   
\end{remark}

\begin{theorem}[On the monotone polar of subdifferentials]\label{t:monoclosure}
Let $\partial_w$ be a weak subdifferential as defined in Definition \ref{WSUBDIFF}, and $f:E\rightarrow\RX$ be proper, lower semicontinuous and have insignificant downside.  Then
$$\big(\gra\partial_w f\big)^\mu \subseteq \overline{\gra \partial_w f}.$$
\end{theorem}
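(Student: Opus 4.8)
The plan is to obtain this inclusion as an immediate consequence of the two substantive results already in hand, namely Theorem \ref{t:main} and Lemma \ref{BARlem}. The hypotheses imposed on $f$ here (proper, lower semicontinuous, insignificant downside) are exactly those of Theorem \ref{t:main}, so the first step is simply to invoke that theorem and conclude that $\gra \partial_w f$ is stably $\rl$--dense in $E \times E^*$.

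The second step is to pass from stable $\rl$--density to ordinary $\rl$--density, so that Lemma \ref{BARlem} becomes applicable. Fix $(y,y^*) \in E \times E^*$ and let $M \ge 0$ be the constant furnished by the stable density condition. The infimum of $\rl(s-y,s^*-y^*)$ taken over all of $\gra \partial_w f$ ranges over a larger index set than the infimum over the norm--restricted points with $\|s-y\| \le M$ and $\|s^*-y^*\| \le M$, so the former can only be smaller; and since $\rl \ge 0$ pointwise, the unrestricted infimum is squeezed between $0$ and the restricted infimum $0$. Hence $\inf_{(s,s^*)\in \gra\partial_w f}\rl(s-y,s^*-y^*)=0$, and $\gra \partial_w f$ is $\rl$--dense.

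Finally, I would apply Lemma \ref{BARlem} with $A := \gra \partial_w f$, which yields precisely $\big(\gra\partial_w f\big)^\mu \subseteq \overline{\gra \partial_w f}$, the desired conclusion.

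I expect there to be no real obstacle: all of the analytic difficulty is already packaged into Theorem \ref{t:main} (the construction via Ekeland's variational principle together with the duality map $J$) and into Lemma \ref{BARlem} (the square--completion observation that a small nonnegative $\rl$--value, combined with a nonnegative monotone pairing $\scal{s-x}{s^*-x^*} \ge 0$, forces the squared norm distance to be small). The only point deserving a moment's care is the reduction from stable to plain $\rl$--density, and that is merely the monotonicity of an infimum under enlargement of the index set, so it costs essentially nothing.
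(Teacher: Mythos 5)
Your proposal is correct and follows exactly the paper's own route: the paper proves this theorem by citing Theorem \ref{t:main} and Lemma \ref{BARlem} as immediate. Your explicit remark that stable $\rl$--density implies plain $\rl$--density (since the unrestricted infimum of a nonnegative quantity is bounded above by the restricted one) is a small detail the paper leaves implicit, and it is handled correctly.
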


\begin{proof}
This is immediate from Lemma~\ref{BARlem} and Theorem~\ref{t:main}.
\end{proof}
\medbreak
In what follows, we write $\partial_{CR}$ for the the Clarke-Rockafellar subdifferential.

\begin{corollary}\label{LLcor}
Let $f:E\rightarrow\RX$ be a locally Lipschitz function with insignificant downside.   Then
$$\big(\gra\partial_{CR}f\big)^\mu\subseteq \gra\partial_{CR}f.$$
\end{corollary}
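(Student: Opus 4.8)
The plan is to apply Theorem~\ref{t:monoclosure} with $\partial_w := \partial_{CR}$ and then upgrade the closure $\overline{\gra\partial_{CR}f}$ appearing there to $\gra\partial_{CR}f$ itself. First I would observe that a locally Lipschitz function is continuous, hence proper and lower semicontinuous, and that (as recorded in the discussion following Definition~\ref{WSUBDIFF}) the Clarke--Rockafellar subdifferential is a weak subdifferential. Since $f$ is moreover assumed to have insignificant downside, Theorem~\ref{t:monoclosure} applies verbatim and yields $\big(\gra\partial_{CR}f\big)^\mu \subseteq \overline{\gra\partial_{CR}f}$.

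The only thing left is to show that $\gra\partial_{CR}f$ is already norm--closed, so that $\overline{\gra\partial_{CR}f} = \gra\partial_{CR}f$ and the inclusion just obtained becomes the asserted one. Here I would invoke the regularity of the Clarke--Rockafellar subdifferential in the locally Lipschitz setting: for such $f$ it coincides with the Clarke generalized gradient, whose values are nonempty, convex and weak-$*$ compact, and which is upper semicontinuous as a multifunction from $(E,\|\cdot\|)$ into $E^*$ endowed with the weak-$*$ topology. Upper semicontinuity together with weak-$*$ compactness of the values forces $\gra\partial_{CR}f$ to be closed in $E \times (E^*,w^*)$; since the weak-$*$ topology is coarser than the norm topology, a weak-$*$ closed set is a fortiori norm--closed, which is exactly what is required.

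The main obstacle is precisely this closedness step; everything else is bookkeeping on top of Theorem~\ref{t:monoclosure}. The delicate point is that the closure in Lemma~\ref{BARlem}, and hence in Theorem~\ref{t:monoclosure}, is the norm--closure in $E \times E^*$, so I must be careful that the standard upper-semicontinuity statement for the Clarke gradient, which is naturally phrased with the weak-$*$ topology on $E^*$, really does deliver norm--closedness of the graph; the elementary implication ``weak-$*$ closed $\Rightarrow$ norm--closed'' handles this. I would cite the relevant upper-semicontinuity and local-boundedness results (e.g.\ from Clarke's monograph \cite{clarke}) rather than reprove them, noting that in finite dimensions the closedness of $\gra\partial_{CR}f$ is entirely classical and no topological subtlety arises at all.
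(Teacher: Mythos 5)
Your proposal is correct and follows essentially the same route as the paper: apply Theorem~\ref{t:monoclosure} with $\partial_w = \partial_{CR}$ and combine it with the norm--closedness of $\gra\partial_{CR}f$ for locally Lipschitz $f$, which the paper simply cites (Clarke's Proposition~2.1.5(b), or \cite[Theorem 5.2.7]{bzhu}) rather than rederiving via weak-$*$ upper semicontinuity as you sketch. Your extra care about ``weak-$*$ closed $\Rightarrow$ norm--closed'' is sound but not a different argument, just a more explicit justification of the cited fact.
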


\begin{proof}
When $f$ is locally Lipschitz, it follows from, e.g., \cite[Proposition 2.1.5(b)]{clarke} or \cite[Theorem 5.2.7]{bzhu} that $\gra\partial_{CR}f$ is closed, and we obtain the result from Theorem~\ref{t:monoclosure}.
\end{proof}

\medbreak
The following example shows that, in the situation of Corollary~\ref{LLcor},  $\gra\big(\partial_{CR}f\big)^\mu$ might be empty.

\begin{example}
Let $f(x)=\sin x$ on $\RR$. Then, for all $x \in \RR$, $\partial_{CR}f(x)=\{\cos x\}$.   If $(x,x^*)\in \big(\gra \partial_{CR}f\big)^\mu$ then, by Theorem~\ref{t:monoclosure}, $(x,x^*)\in \gra \partial_{CR}f$, and so $x^*=\cos x$ and
$$\all\ y \in \RR,\ \scal{y-x}{\cos y-\cos x}\geq 0.$$
Thus, whenever $y\geq x$, $\cos y \geq \cos x$, from which $\cos x=-1$;
also, whenever $y\leq x$, $\cos y \leq \cos x$, from which $\cos y\leq -1$ whenever $y\leq x$. Since this is impossible,  $(\gra \partial_{CR}f)^\mu =\varnothing$.
\end{example}

\medbreak
One immediate consequence of Theorem~\ref{t:monoclosure} is the following celebrated result due to Rockafellar \cite{Rock70}.
\begin{theorem}
Let $f:E\rightarrow\RX$ be a proper lower semicontinuous convex function. Then $\partial f\colon E\To E^*$ is maximally monotone.
\end{theorem}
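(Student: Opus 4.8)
The plan is to observe that the ordinary convex subdifferential $\partial$ is itself a weak subdifferential in the sense of Definition~\ref{WSUBDIFF}, so that Theorem~\ref{t:monoclosure} applies directly with $\partial_w = \partial$. First I would verify the two axioms. Axiom (i) is immediate: if a convex $f$ attains a global minimum at $x$, then $f(y) \ge f(x) + \scal{y - x}{0}$ for all $y \in E$, so $0 \in \partial f(x)$. Axiom (ii), the inclusion $\partial(f + h) \subseteq \partial f + \partial h$ for continuous convex real $h$, is precisely the nontrivial direction of the Moreau--Rockafellar sum rule, whose hypothesis (continuity of $h$ at a point of $\dom f$) is met here since $h$ is finite and continuous on all of $E$.

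Next I would check that a proper lower semicontinuous convex $f$ has insignificant downside. Such an $f$ admits a continuous affine minorant $x \mapsto \scal{x}{x^*} + \gamma$; this is the standard consequence of separating the (closed, convex) epigraph of $f$ from a point lying strictly below it via the Hahn--Banach theorem. Hence $f(x) \ge -\|x^*\|\,\|x\| - |\gamma|$ for all $x \in E$, so $f$ has insignificant downside with $a_0 = 0 < \thalb$, exactly as noted in the remark following Theorem~\ref{t:main}.

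With these two observations in hand, Theorem~\ref{t:monoclosure} yields $\big(\gra \partial f\big)^\mu \subseteq \overline{\gra \partial f}$. It then remains to record two standard facts about convex subdifferentials: that $\gra \partial f$ is norm--closed (a routine consequence of the lower semicontinuity of $f$, obtained by passing to the limit in the subgradient inequality), and that $\partial f$ is monotone, so that $\gra \partial f \subseteq \big(\gra \partial f\big)^\mu$. Closedness upgrades the first inclusion to $\big(\gra \partial f\big)^\mu \subseteq \gra \partial f$, and combining with monotonicity gives $\big(\gra \partial f\big)^\mu = \gra \partial f$. By the characterization recalled at the start of Section~\ref{POLsec}, this equality is exactly the maximal monotonicity of $\partial f$.

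I do not anticipate a genuine obstacle, since all of the analytic work has already been carried out in Theorem~\ref{t:main} and Theorem~\ref{t:monoclosure}; the only point demanding care is axiom (ii), where one must invoke the sum rule in the correct direction and confirm that the continuity of $h$ legitimizes it. One could instead bypass Theorem~\ref{t:monoclosure} and apply Theorem~\ref{CLOSEDRLMAXthm} directly, using that stable $r_L$--density (from Theorem~\ref{t:main}) implies $r_L$--density, together with the closedness and monotonicity of $\gra \partial f$; but the route through Theorem~\ref{t:monoclosure} is the most economical.
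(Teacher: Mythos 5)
Your overall strategy coincides with the paper's: establish insignificant downside via a continuous affine minorant, feed $f$ into Theorem~\ref{t:monoclosure}, and finish with the closedness and monotonicity of $\gra\partial f$. The gap is in your very first step, the claim that the classical convex subdifferential $\partial$ is itself a weak subdifferential in the sense of Definition~\ref{WSUBDIFF}. That definition requires a rule defined on \emph{every} proper lower semicontinuous $f$ (not only the convex ones) satisfying axiom (ii) for every such $f$, and the Moreau--Rockafellar sum rule you invoke is only valid when $f$ is convex. For nonconvex $f$ the inclusion $\partial(f+h)(x)\subseteq \partial f(x)+\partial h(x)$ genuinely fails for the Fenchel subdifferential: take $E=\RR$, $f:=-\|\cdot\|$, $h:=\|\cdot\|$ and $x:=0$; then $f+h\equiv 0$, so $\partial(f+h)(0)=\{0\}$, while $\partial f(0)=\varnothing$ (the inequality $-\|y\|\ge\scal{y}{x^*}$ for all $y$ forces $\scal{u}{x^*}\le -1$ for every unit vector $u$, and replacing $u$ by $-u$ forces $\scal{u}{x^*}\ge 1$). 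So $\partial$ is not a weak subdifferential, and Theorem~\ref{t:monoclosure} cannot be cited with $\partial_w=\partial$.

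The repair is exactly the paper's device: take $\partial_w:=\partial_{CR}$, the Clarke--Rockafellar subdifferential, which the paper has already recorded as a weak subdifferential, and use the identity $\partial_{CR}f=\partial f$ for proper lower semicontinuous convex $f$ \big(\cite[Theorem 5]{Rock80}\big); then Theorem~\ref{t:monoclosure} gives $\big(\gra\partial f\big)^\mu=\big(\gra\partial_{CR}f\big)^\mu\subseteq\overline{\gra\partial_{CR}f}=\overline{\gra\partial f}=\gra\partial f$, and your closedness and monotonicity steps finish the proof exactly as you describe. Alternatively, you could observe that when $f$ is convex every function to which the axioms are applied in Lemma~\ref{BRLEM} and Theorem~\ref{t:main} is again convex (the perturbations $k=j(\cdot-y)-y^*$ and $h=\beta\|\cdot-s\|$ are convex and continuous), so the whole chain does go through with the Fenchel subdifferential restricted to convex functions; but that requires re-examining those proofs rather than quoting Theorem~\ref{t:monoclosure} as a black box. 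Everything after your first paragraph --- the affine minorant, the closedness of $\gra\partial f$, monotonicity, and the identification of $A=A^\mu$ with maximal monotonicity --- matches the paper.
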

\begin{proof} It is well known that $\gra \partial f$ is closed in $E\times E^*$ and (from a separation theorem in $E\times\RR$) that there exists $u^*\in E^*$ and $\gamma\in\RR$ such that $f\geq u^*+\gamma$ on $E$, consequently $f$ has insignificant downside.   We now apply Theorem~\ref{t:monoclosure} with $\partial_w = \partial_{CR}$ and (noting from \cite[Theorem 5]{Rock80}  that $\partial_{CR}f = \partial f$)  derive that
$$\big(\gra\partial f\big)^\mu = \big(\gra\partial_{CR}f\big)^\mu \subseteq \overline{\gra \partial_{CR}f} = \overline{\gra \partial f} = \gra \partial f.$$
The result now follows since $\partial f$ is monotone, and so $\gra \partial f \subseteq \big(\gra\partial f\big)^\mu $. 
\end{proof}
\section{$\rl$--density of multifunctions}\label{RLDsec}

\begin{definition} Let $E$ be a Banach space and $T:E\rightrightarrows E^*$.   We say that $T$ has {\em hyperdense range} if, for every $y^*\in E^*$ there exists a sequence $\big\{(t_n,t_n^*)\big\}_{n \ge 1}$ of elements of $\gra T$
such that
$$\sup_{n \ge 1}\|t_n\| < \infty\quad\hbox{and}\quad \quad \lim_{n\rightarrow\infty}\|t_n^* - y^*\|=0.$$
If $T$ is surjective then obviously $T$ has hyperdense range, and in this case\break Theorem~\ref{t:rloperator} can be rewritten in the simpler form: {\em if $\gra S + \gra (-J) = E \times E^*$ then $S$ is stably $\rl$--dense in $E\times E^*$.}   In the case when $S$ is monotone and $E$ is reflexive, we obtain a generalization of  \cite[Theorem~10.3($\Longleftarrow$), p.\ 36]{MANDM}.
\end{definition}
\begin{theorem}\label{t:rloperator}
Let $S: E \rightrightarrows E^*$.   Assume that for every $y\in E$, the mapping $S + J(\cdot-y)$ has hyperdense range.   Then $S$ is stably $\rl$--dense in $E\times E^*$.
\end{theorem}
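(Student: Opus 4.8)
The plan is to fix $(y,y^*) \in E \times E^*$, apply the hyperdense-range hypothesis to the multifunction $T := S + J(\cdot - y)$ at the point $y^*$, and then extract from the resulting sequence a sequence in $\gra S$ that witnesses the stable $\rl$--density at $(y,y^*)$.

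First I would set $T := S + J(\cdot - y)$ and use the hypothesis to obtain a sequence $\big\{(t_n,t_n^*)\big\}_{n \ge 1}$ in $\gra T$ with $\sup_n\|t_n\| < \infty$ and $\|t_n^* - y^*\| \to 0$. By the definition of $T$, each $t_n^*$ decomposes as $t_n^* = s_n^* + u_n^*$ with $s_n^* \in S t_n$ and $u_n^* \in J(t_n - y)$. Writing $s_n := t_n$ then gives $(s_n,s_n^*) \in \gra S$, and the well known properties of the duality map yield $\scal{s_n - y}{u_n^*} = \|s_n - y\|^2 = \|u_n^*\|^2$.

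The heart of the argument is the estimate of $\rl(s_n - y,s_n^* - y^*)$. Setting $e_n^* := t_n^* - y^*$ (so $\|e_n^*\| \to 0$) and noting $s_n^* - y^* = e_n^* - u_n^*$, the identity $\scal{s_n-y}{u_n^*} = \|s_n-y\|^2$ gives $\scal{s_n - y}{s_n^* - y^*} = \scal{s_n - y}{e_n^*} - \|s_n - y\|^2$. Substituting into the definition of $\rl$ and bounding $\thalb\|s_n^* - y^*\|^2 \le \thalb\big(\|e_n^*\| + \|s_n - y\|\big)^2$ through the triangle inequality and $\|u_n^*\| = \|s_n - y\|$, the quadratic terms in $\|s_n - y\|$ cancel exactly ($\thalb + \thalb - 1 = 0$), leaving
\begin{equation*}
\rl(s_n - y,s_n^* - y^*) \le \thalb\|e_n^*\|^2 + 2\|s_n - y\|\,\|e_n^*\|.
\end{equation*}
Since $\|s_n - y\| \le \|t_n\| + \|y\|$ is bounded and $\|e_n^*\| \to 0$, the right-hand side tends to $0$; as $\rl \ge 0$ always, this forces $\rl(s_n - y,s_n^* - y^*) \to 0$.

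Finally, to upgrade this to \emph{stable} $\rl$--density I would observe that $\|s_n - y\|$ is bounded and $\|s_n^* - y^*\| \le \|e_n^*\| + \|s_n - y\|$ is therefore bounded as well, so a single constant $M \ge 0$ bounds both quantities for all $n$. The whole sequence then lies in the region $\{\|s - y\| \le M,\ \|s^* - y^*\| \le M\}$ while driving $\rl$ to $0$, which is precisely the conclusion required by Definition~\ref{RLDENSE}. I expect the main obstacle to be the estimate of the third paragraph: because $E$ need not be a Hilbert space one cannot expand $\|e_n^* - u_n^*\|^2$ via an inner product, so the cancellation of the merely bounded (not small) term $\|s_n - y\|^2$ must be engineered from the triangle inequality together with the duality identities $\scal{s_n-y}{u_n^*} = \|s_n-y\|^2 = \|u_n^*\|^2$.
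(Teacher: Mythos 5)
Your proposal is correct and follows essentially the same route as the paper: decompose $t_n^*\in S(s_n)+J(s_n-y)$ as $s_n^*+u_n^*$, use the duality-map identities $\scal{s_n-y}{u_n^*}=\|s_n-y\|^2=\|u_n^*\|^2$ to cancel the quadratic term, and bound $\rl(s_n-y,s_n^*-y^*)\le 2\|s_n-y\|\,\|e_n^*\|+\thalb\|e_n^*\|^2$ together with the uniform bounds on $\|s_n-y\|$ and $\|s_n^*-y^*\|$ needed for stability. The only cosmetic difference is that you carry the whole sequence through the estimate, whereas the paper fixes $\varepsilon$ and extracts a single term with $\|t^*-y^*\|<\beta$.
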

\begin{proof} Let $(y,y^*)\in E\times E^*$ and $0 < \varepsilon < 1$.   By hypothesis,  there exist a sequence $\big\{(s_n,t_n^*)\big\}_{n \ge 1}$ of elements of $E \times E^*$ and $M > 0$ such that, for all $n \ge 1$,
\begin{equation}\label{e:bound1}
t_n^*\in S(s_n) + J(s_n - y),\quad \|s_n\|\leq M,\quand \lim_{n \to \infty}\|t_n^*-y^*\| = 0.
\end{equation}
Choose
$$0<\beta<\frac{\varepsilon}{2(M + \|y\|)+1}<1.$$
Then, from \eqref{e:bound1}, there exists $(s,t^*) \in E \times E^*$ such that
\begin{equation}\label{e:bound2}
t^*\in S(s) + J(s - y),\quad \|s\|\leq M,\quand \|t^*-y^*\| < \beta.
\end{equation}
Thus there exists $(s,s^*) \in \gra S$ such that $t^* - s^* \in J(s - y)$.   By the properties of duality mappings,
\begin{equation}\label{e:propJ}
\scal{s - y}{t^* - s^*}=\|s - y\|^2\quand \|t^* - s^*\|=\|s - y\|.
\end{equation}
It follows that
\begin{align*}
\scal{s - y}{s^* - y^*}
&= \scal{s - y}{t^* - y^*} - \scal{s - y}{t^* - s^*}\\
&= \scal{s - y}{t^* - y^*} - \|s - y\|^2,
\end{align*}
which gives
\begin{equation}\label{e:innerdiff}
 \thalb\|s - y\|^2 + \scal{s - y}{s^* - y^*}
= \scal{s - y}{t^* - y^*} - \thalb\|s - y\|^2.
\end{equation}
Now, from \eqref{e:bound2},
\begin{equation}\label{e:preimage}
\|s - y\|\leq \|s\|+\|y\|\leq M + \|y\|,
\end{equation}
and, combining this with \eqref{e:bound2} and \eqref{e:propJ},
\begin{equation}\label{e:image}
\|s^* - y^*\| \leq \|t^* - s^*\|+\|t^* - y^*\|
\leq \|s - y\|+\beta  \leq M+\|y\|+1,
\end{equation}
from which
\begin{equation}\label{e:imagebound}
\|s^*\|\leq \|s^*-y^*\|+\|y^*\|\leq M + \|y\| + \|y^*\| + 1.
\end{equation}
Since $\rl(s - y,s^* - y^*) = \thalb\|s - y\|^2 + \scal{s - y}{s^*-y^*} + \thalb\|s^*-y^*\|^2$, \eqref{e:innerdiff}, \eqref{e:image} and \eqref{e:bound2} imply that  
\begin{align*}
\rl(s &- y,s^* - y^*)
\leq \scal{s - y}{t^* - y^*} - \thalb\|s - y\|^2 +\thalb(\|s - y\|+\beta)^2\\
&\leq \beta\|s - y\| - \thalb\|s - y\|^2 + \thalb\|s - y\|^2 + \beta\|s - y\| + \thalb\beta^2\\
&= 2\beta\|s - y\| + \thalb\beta^2.
\end{align*}
Thus, by \eqref{e:preimage},
\begin{align*}
\rl(s - y,s^* - y^*) \leq 2\beta (M + \|y\|) + \beta =\beta[2(M + \|y\|) + 1]<\varepsilon.
\end{align*}
From \eqref{e:bound2} and \eqref{e:imagebound}, $\|s\| \le M$ and $\|s^*\| \le M + \|y\| + \|y^*\| + 1$.   Consequently, $S$ is stably $\rl$--dense in $E\times E^*$.
\end{proof}
\begin{corollary}[Minty's condition for maximal monotonicity] Let $S:E\rightrightarrows E^*$ be monotone, $\gra S$ be closed in $E\times E^*$ and, for all $y\in E$, $S + J(\cdot-y)$ have hyperdense range.  Then $S$ is maximally monotone.
\end{corollary}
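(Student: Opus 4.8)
The plan is to chain together the two main structural results already established in this section and the previous one, since the hypotheses of the corollary are tailored precisely to feed them in succession. First I would observe that the standing assumption that $S + J(\cdot - y)$ has hyperdense range for every $y \in E$ is exactly the hypothesis of Theorem~\ref{t:rloperator}. Applying that theorem directly yields that $S$ is stably $\rl$--dense in $E \times E^*$, so no further work is needed to obtain this intermediate conclusion.

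Next I would pass from stable $\rl$--density to ordinary $\rl$--density, which is the only step requiring a hand verification. Fix $(y,y^*) \in E \times E^*$. In Definition~\ref{RLDENSE}, the infimum of $\rl(s - y,s^* - y^*)$ appearing in the \emph{stable} version is taken over the bounded subset of $\gra S$ cut out by the constraints $\|s - y\| \le M$ and $\|s^* - y^*\| \le M$, hence over a \emph{smaller} index set than the one defining ordinary $\rl$--density; it therefore dominates the latter infimum. Since $\rl \ge 0$ forces both infima to be nonnegative, the vanishing of the stable infimum forces the vanishing of the ordinary one. Thus $\gra S$ is $\rl$--dense in $E \times E^*$.

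Finally, I would feed this together with the two remaining hypotheses — that $\gra S$ is closed and that $S$ is monotone — into Theorem~\ref{CLOSEDRLMAXthm}, whose conclusion is exactly that $S$ is maximally monotone. I do not expect any genuine obstacle here: the entire substantive content sits inside Theorem~\ref{t:rloperator} and Theorem~\ref{CLOSEDRLMAXthm}, and the corollary is essentially their composition, with the elementary implication ``stably $\rl$--dense $\Rightarrow$ $\rl$--dense'' as the sole connecting tissue.
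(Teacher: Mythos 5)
Your proposal is correct and follows exactly the paper's route: the paper's proof is the one-line observation that the corollary is immediate from Theorem~\ref{t:rloperator} and Theorem~\ref{CLOSEDRLMAXthm}. Your explicit verification that stable $\rl$--density implies $\rl$--density (via the infimum over the constrained subset dominating the unconstrained one, with nonnegativity of $\rl$ squeezing both to zero) is a correct filling-in of a step the paper leaves implicit.
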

\begin{proof} This is immediate from Theorem~\ref{CLOSEDRLMAXthm} and Theorem~\ref{t:rloperator}.
\end{proof}
\section*{Acknowledgments}
XW was partially supported by a Discovery Grant of NSERC.

\end{document}